\numberwithin{equation}{section}
\theoremstyle{definition}
\newtheorem{theorem}{Theorem}[section]
\newtheorem{proposition}[theorem]{Proposition}
\newtheorem{lemma}[theorem]{Lemma}
\newtheorem{definition}[theorem]{Definition}
\newcommand*{\From}[1]{#1_{\mathrm{from}}} 
\newcommand*{\To}[1]{#1_{\mathrm{to}}} 
\newcommand*{\Z}{\widetilde{Z}}
\begin{document}

\title{Expansions of the Potts model partition function along deletions and contractions}
\author{Ryo Takahashi}
\address{Mathematical Institute, Graduate School of Science, Tohoku University, Sendai, Japan}
\date{\today}

\begin{abstract}
    We establish two expansions of the Potts model partition function of a graph. One is along the deletions of a graph, a rewritten formula given in Biggs~(1977). The other is along the contractions of a graph.
    Then, we specialize the partition function to the chromatic or flow polynomial by the M\"obius inversion formula, and prove two known equations of the two polynomials. One expresses the chromatic polynomial as a weighted sum of flow polynomials of deletions, the other expresses the flow polynomial as a weighted sum of chromatic polynomials of contractions. The proof of the former by Biggs formula is due to Bychkov et al.\@~(2021).
    The two expressions are considered to be dual in the sense of their forms, and transfer to each other with plane duality. This relation also holds in our expansions of the Potts model partition function.
    We clarify this duality by using matroid duality.
    Partition functions can be extended to matroids, and the two expansions can also be extended. The two expansions transfer to each other with matroid duality, so in addition to an elementary combinatorial proof of the two, we give another proof by the ``duality'' relation between them. 
\end{abstract}

\maketitle

\section{Introduction}

The Potts model partition function is a multivariate polynomial defined for a graph. It was introduced in statistical mechanics, and has now been extended~\cite{sokal2005multivariate} to matroids and has wide combinatorial applications as a polynomial invariant of graphs and matroids. Then, the strongest conjecture of Mason (for the independent sets of matroids) was proved by uncovering concavity properties of this polynomial~\cites{brändén2024lorentzian}.

Let $G=(V,E)$ be any finite undirected graph with vertex set $V$ and edge set $E$. In this paper, graphs can have multiple edges and loops.

We assume $\bm v\coloneqq (v_e)_{e\in E}$ and $\bm u\coloneqq (u_e)_{e\in E}$ are any sequences of numbers indexed by $E$. 
We use the following abbreviations: For any $F\subseteq E$, we set
$\bm{v}^F\coloneqq\prod_{e\in F}v_e$.
The termwise subtraction (resp.\@ division) of two sequences $\bm v$ and $\bm u$ indexed by the same set are denoted by $\bm v-\bm u$ (resp.\@ $\bm v/\bm u$).
$\bm u \pm 1$ means $(u_e \pm 1)_{e\in E}$ and the constant sequence of a number $c$ is written simply as $c$ if no confusion occurs.
We understand that the denominator of a fraction is assumed to be non-zero.

Let $q$ be an arbitrary number. The definition of the Potts model partition function~\cites{fortuin1972536,sokal2005multivariate} is in order. Speaking of the Potts model, $q$ is a positive integer, but \cite{fortuin1972536}*{(4.23)} allows it to take any number.

\begin{definition}\label{definition:fortuin_kasteleyn_representation_of_the_potts_model}
    \[Z(G;q,\bm v)\coloneqq\sum_{A\subseteq E}q^{k_G(A)}\bm v^A\]
    where $k_G(A)$ is the number of connected components of the subgraph $(V,A)$.
\end{definition}

This multivariate polynomial is also known as the \emph{multivariate Tutte polynomial} of $G$. This polynomial specializes to many important invariants of graphs such as the \emph{chromatic polynomial}~\cite{READ196852} $\chi(G;q)$, which counts the number of proper colorings of a graph, and the \emph{flow polynomial}~\cite{diestelgraphtheory} $\chi^\ast(G;q)$ known as its dual.

As usual, for $F\subseteq E$, $G-F$ (resp.\@ $G/F$) represents the graph obtained from $G$ by deleting (resp.\@ contracting) all the edges in $F$ and is called a \emph{deletion} (resp.\@ \emph{contraction}). One should bear in mind that $G-F=(V,E-F)$ and we delete neither loops nor multiple edges formed in the contracting process (e.g.\@ \Cref{figure:deletion and contraction}).

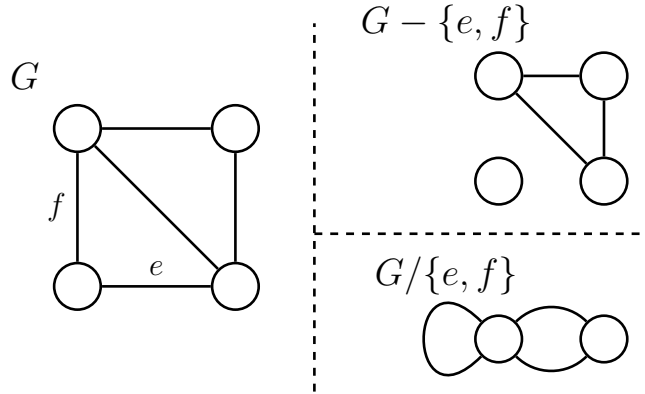
\begin{figure}[htbp]\centering
\begin{tikzpicture}[scale=0.7]
\node at(-1,4){\large $G$};
\node[draw,circle,minimum size=1.5em,line width=1pt](0) at (0,0){};
\node[draw,circle,minimum size=1.5em,line width=1pt](1) at (3,0){};
\node[draw,circle,minimum size=1.5em,line width=1pt](2) at (3,3){};
\node[draw,circle,minimum size=1.5em,line width=1pt](3) at (0,3){};
\draw[line width=1pt](0)--(1)node[pos=0.5,above]{$e$};
\draw[line width=1pt](1)--(2);
\draw[line width=1pt](2)--(3);
\draw[line width=1pt](3)--(0)node[pos=0.5,left]{$f$};
\draw[line width=1pt](1)--(3);

\node at(7,5){\large $G-\{e,f\}$};
\node[draw,circle,minimum size=1.5em,line width=1pt](4) at (8,2){};
\node[draw,circle,minimum size=1.5em,line width=1pt](5) at (10,2){};
\node[draw,circle,minimum size=1.5em,line width=1pt](6) at (10,4){};
\node[draw,circle,minimum size=1.5em,line width=1pt](7) at (8,4){};
\draw[line width=1pt](5)--(6);
\draw[line width=1pt](6)--(7);
\draw[line width=1pt](5)--(7);

\draw[line width=1pt,dashed](4.5,-2)--(4.5,5){};
\draw[line width=1pt,dashed](4.5,1)--(11,1){};

\node at(7,0.2){\large $G/\{e,f\}$};
\node[draw,circle,minimum size=1.5em,line width=1pt](8) at (8,-1){};
\node[draw,circle,minimum size=1.5em,line width=1pt](9) at (10,-1){};
\draw[line width=1pt](8)to[out=45,in=135](9);
\draw[line width=1pt](8)to[out=-45,in=-135](9);
\draw[line width=1pt](8)to[out=-135,in=135,loop](8);
\end{tikzpicture}
\caption{Deleting or contracting of $\{e,f\}$ from $G$}
\label{figure:deletion and contraction}
\end{figure}

Besides a famous deletion-contraction recurrence of the Potts model partition function~\cite{sokal2005multivariate}, 
we concerned with the expansions of the Potts model partition function $Z(G;q,\bm v)$ of a graph $G$ along the deletions $G-F$ ($F\subseteq E$), and along the contractions $G/F$ ($F\subseteq E$).
\begin{theorem}[Main]\label{theorem:graphic main}
\begin{align}
    Z(G;q,\bm{v})&=\left(\frac{\bm{v}}{\bm{u}}\right)^E\sum_{F\subseteq E}\left(\frac{\bm{u}}{\bm{v}}-1\right)^F Z(G-F;q,(u_e)_{e\in E-F}),\label{theorem:graphic biggs_formula}\\
    Z(G;q,\bm{v})&=\sum_{F\subseteq E}\left(\bm{v}-\bm{u}\right)^F Z(G/F;q,(u_e)_{e\in E-F}).\label{theorem:graphic main_theorem}
\end{align}
\end{theorem}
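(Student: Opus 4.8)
The plan is to prove \eqref{theorem:graphic biggs_formula} and \eqref{theorem:graphic main_theorem} in parallel by the same elementary device: substitute \Cref{definition:fortuin_kasteleyn_representation_of_the_potts_model} into each right-hand side, interchange the order of the resulting double sum over pairs $(F,A)$ with $F\subseteq E$ and $A\subseteq E-F$, and collapse the freed inner sum into a product over edges.

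For \eqref{theorem:graphic main_theorem}, I would first record the standard contraction identity $k_{G/F}(A)=k_G(A\cup F)$, valid for $A\subseteq E-F$: since the vertices of $G/F$ are those of $G$ identified according to the connected components of $(V,F)$, a connected component of $A$ in $G/F$ is precisely a connected component of $A\cup F$ in $G$. Plugging $Z(G/F;q,(u_e)_{e\in E-F})=\sum_{A\subseteq E-F}q^{k_{G/F}(A)}\bm u^A$ into the right-hand side turns it into $\sum_{F\subseteq E}\sum_{A\subseteq E-F}(\bm v-\bm u)^F\bm u^A\,q^{k_G(A\cup F)}$. Reindexing by $B\coloneqq A\cup F$ (a disjoint union) and summing over the ways to split $B=A\sqcup F$, the coefficient of $q^{k_G(B)}$ becomes $\sum_{F\subseteq B}(\bm v-\bm u)^F\bm u^{B-F}=\prod_{e\in B}\bigl((v_e-u_e)+u_e\bigr)=\bm v^B$, which is exactly the coefficient appearing in $Z(G;q,\bm v)$.

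For \eqref{theorem:graphic biggs_formula} I would use instead that deletion leaves the vertex set unchanged, so $k_{G-F}(A)=k_G(A)$ for $A\subseteq E-F$. Expanding $Z(G-F;q,(u_e)_{e\in E-F})$ and swapping the order of summation so that $A\subseteq E$ is held fixed while $F$ ranges over $E-A$, the right-hand side becomes $\left(\tfrac{\bm v}{\bm u}\right)^{E}\sum_{A\subseteq E}q^{k_G(A)}\bm u^A\sum_{F\subseteq E-A}\left(\tfrac{\bm u}{\bm v}-1\right)^{F}$. The inner sum factors as $\prod_{e\in E-A}\bigl(1+(\tfrac{u_e}{v_e}-1)\bigr)=\left(\tfrac{\bm u}{\bm v}\right)^{E-A}$; multiplying by the global prefactor $\left(\tfrac{\bm v}{\bm u}\right)^{E}$ leaves $\left(\tfrac{\bm v}{\bm u}\right)^{A}$, which converts $\bm u^A$ into $\bm v^A$, so the whole sum collapses to $Z(G;q,\bm v)$. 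All the fractions are legitimate under the paper's standing assumption that denominators do not vanish.

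Both computations are short; the only thing demanding care is the bookkeeping of which edges carry a $\bm v$-, a $\bm u$-, or a $\bm v/\bm u$-factor once the summation order is exchanged — a subset $B\subseteq E$ together with a splitting $B=A\sqcup F$ for \eqref{theorem:graphic main_theorem}, and a subset $A\subseteq E$ together with a subset $F\subseteq E-A$ for \eqref{theorem:graphic biggs_formula}. I expect the main (and still minor) obstacle to be stating the contraction identity $k_{G/F}(A)=k_G(A\cup F)$ carefully when $G$ has loops and multiple edges: contracting a loop or one of several parallel edges is harmless here, since it only affects the $(\bm v-\bm u)^F$ factor and not the component count of $A\cup F$, so the identity goes through unchanged.
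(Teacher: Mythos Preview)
Your proposal is correct and follows essentially the same approach as the paper: both proofs expand via \Cref{definition:fortuin_kasteleyn_representation_of_the_potts_model}, swap the double sum over $(F,A)$, and collapse the inner sum by the binomial theorem, invoking $k_{G-F}(A)=k_G(A)$ for \eqref{theorem:graphic biggs_formula} and $k_{G/F}(A)=k_G(A\cup F)$ for \eqref{theorem:graphic main_theorem}. The only cosmetic difference is that for \eqref{theorem:graphic main_theorem} the paper starts from $Z(G;q,\bm v)$ and expands $\bm v^A=((\bm v-\bm u)+\bm u)^A$, whereas you start from the right-hand side and reindex by $B=A\sqcup F$; the two computations are the same read in opposite directions.
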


The coefficients of \Cref{theorem:graphic biggs_formula} (resp.\@ \Cref{theorem:graphic main_theorem}) are related to the growth rates (resp.\@ increments) from $\bm v$ to $\bm u$.

In \cite{biggs1977interaction}, Biggs studied the Potts model partition function with a different definition to \Cref{definition:fortuin_kasteleyn_representation_of_the_potts_model} and found a formula~\cite{biggs1977interaction}*{Theorem~1} which was called \emph{Biggs formula} by Bychkov et al.~\cite{bychkov2021functional}*{Lemma~2.13}. \Cref{theorem:graphic biggs_formula} is another formulation of that theorem along our definition, so we also call it Biggs formula.

We will generalize the two expansions for a graph into those for a \emph{matroid} (\Cref{theorem:matroidal biggs_formula,theorem:matroidal main_theorem}). Here, we must note that the two expansions are almost the same to \cite{KUNG2010617}*{Identity~4~and~7}. Matroids are generalizations of graphs, and we can define deletions, contractions and dual matroids as extensions of them in graphs. Unlike graphs, every matroid has the dual matroid. See \cite{oxleymatroidtheory}*{Chapter~1-2}. \Cref{theorem:matroidal biggs_formula,theorem:matroidal main_theorem} are dual with respect to \emph{matroid duality}. Thus for \Cref{theorem:graphic main}, in addition to a simple combinatorial proof, we include the one that elucidates matroid duality behind \Cref{theorem:graphic main}.

\Cref{theorem:graphic main} immediately deduces \Cref{theorem:coloring-flow expansions}~\cites{matiyasevich2009certain,read1999chromatic,lerner2022matiyasevich,WOODALL2002201}, where Bychkov et al.~\cite{bychkov2021functional} derived \Cref{equation:matiyasevich} from Biggs version of \Cref{theorem:graphic biggs_formula}.

\begin{theorem}\label{theorem:coloring-flow expansions}
Let $q$ be a positive integer. Then
\begin{align}
    \chi(G;q)&=\frac{(-1)^{|E|}}{q^{|E|-|V|}}\sum_{F\subseteq E}(1-q)^{|F|}\chi^\ast(G-F;q),\label{equation:matiyasevich}\\
    \chi^\ast(G;q)&=\frac{(-1)^{|E|}}{q^{|V|}}\sum_{F\subseteq E}(1-q)^{|F|}\chi(G/F;q).\label{equation:dual_matiyasevich}
\end{align}
\end{theorem}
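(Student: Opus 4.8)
The plan is to derive both identities as specializations of \Cref{theorem:graphic main} at constant parameters, after recalling how $\chi$ and $\chi^\ast$ sit inside $Z$. By the principle of inclusion--exclusion on the Boolean lattice $2^E$ (the M\"obius inversion mentioned in the introduction), the number of proper $q$-colourings and the number of nowhere-zero $\mathbb{Z}_q$-flows of $G$ have the rank expansions
\[
  \chi(G;q)=\sum_{A\subseteq E}(-1)^{|A|}q^{k_G(A)},
  \qquad
  \chi^\ast(G;q)=\sum_{A\subseteq E}(-1)^{|E|-|A|}q^{|A|-|V|+k_G(A)} .
\]
Comparing with \Cref{definition:fortuin_kasteleyn_representation_of_the_potts_model}, this says
\[
  Z(G;q,-1)=\chi(G;q),
  \qquad
  Z(G;q,-q)=(-1)^{|E|}q^{|V|}\chi^\ast(G;q),
\]
and the same two evaluations hold verbatim for the deletions $G-F$ (with vertex set $V$ and edge set $E-F$) and for the contractions $G/F$ (with edge set $E-F$).

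To obtain \eqref{equation:matiyasevich} I would apply \eqref{theorem:graphic biggs_formula} with the constant sequences $\bm v=-1$ and $\bm u=-q$. Then $(\bm v/\bm u)^E=q^{-|E|}$, $(\bm u/\bm v-1)^F=(q-1)^{|F|}$, and the inner term becomes $Z(G-F;q,-q)=(-1)^{|E|-|F|}q^{|V|}\chi^\ast(G-F;q)$, since $G-F$ has $|E|-|F|$ edges and still $|V|$ vertices. Gathering the powers of $q$ into $q^{|V|-|E|}$, pulling $(-1)^{|E|}$ out of the sum, and using $(-1)^{|F|}(q-1)^{|F|}=(1-q)^{|F|}$ gives \eqref{equation:matiyasevich}. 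Dually, for \eqref{equation:dual_matiyasevich} I would apply \eqref{theorem:graphic main_theorem} with $\bm v=-q$ and $\bm u=-1$: now $(\bm v-\bm u)^F=(1-q)^{|F|}$, the inner term is $Z(G/F;q,-1)=\chi(G/F;q)$, and the left-hand side is $Z(G;q,-q)=(-1)^{|E|}q^{|V|}\chi^\ast(G;q)$; dividing through by $(-1)^{|E|}q^{|V|}$ yields \eqref{equation:dual_matiyasevich}.

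This is essentially a bookkeeping argument, so I do not expect a genuine obstacle; the only points requiring care are tracking that deletion preserves the vertex set while contraction changes it---so that the correction factor $(-1)^{|E|}q^{|V|}$ in the flow specialization is applied with the right edge and vertex counts for $G-F$---and keeping the signs straight when absorbing $(-1)^{|F|}$ into $(q-1)^{|F|}$. As an alternative to deriving \eqref{equation:dual_matiyasevich} directly, one can obtain it from \eqref{equation:matiyasevich} by plane duality, which is a special case of the matroid-duality viewpoint developed later; but the two specializations above are the shortest route.
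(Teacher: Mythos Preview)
Your proposal is correct and follows essentially the same route as the paper: it first records the specializations $Z(G;q,-1)=\chi(G;q)$ and $Z(G;q,-q)=(-1)^{|E|}q^{|V|}\chi^\ast(G;q)$ (the paper's \Cref{proposition:specializations}), and then substitutes $\bm v=-1,\ \bm u=-q$ into \eqref{theorem:graphic biggs_formula} for \eqref{equation:matiyasevich} and $\bm v=-q,\ \bm u=-1$ into \eqref{theorem:graphic main_theorem} for \eqref{equation:dual_matiyasevich}. Your bookkeeping of the edge and vertex counts in $G-F$ versus $G/F$, and of the sign absorption $(-1)^{|F|}(q-1)^{|F|}=(1-q)^{|F|}$, is accurate.
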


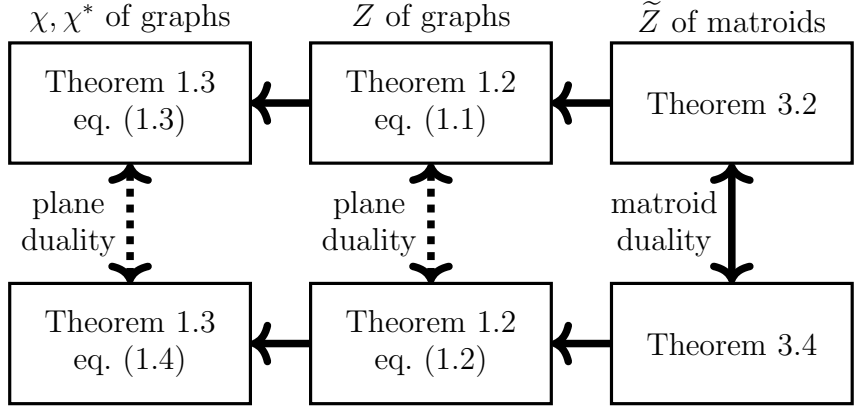
\begin{figure}[htbp]\centering
\begin{tikzpicture}[scale=.8]
    \node at(2,6)[above]{$\chi,\chi^\ast$ of graphs};
    \draw[very thick] (0,4) rectangle node[align=center]{\Cref{theorem:coloring-flow expansions}\\\cref{equation:matiyasevich}} +(4,2);
    \draw[very thick] (0,0) rectangle node[align=center]{\Cref{theorem:coloring-flow expansions}\\\cref{equation:dual_matiyasevich}} +(4,2);
    \draw[<->,dashed,line width=1mm](2,2)--(2,4)node[pos=0.5,left,align=center]{plane\\duality};
    
    \draw[<-,line width=1mm](4,1)--(5,1);
    \draw[<-,line width=1mm](4,5)--(5,5);
    
    \node at(7,6)[above]{$Z$ of graphs};
    \draw[very thick] (5,4) rectangle node[align=center]{\Cref{theorem:graphic main}\\\cref{theorem:graphic biggs_formula}} +(4,2);
    \draw[very thick] (5,0) rectangle node[align=center]{\Cref{theorem:graphic main}\\\cref{theorem:graphic main_theorem}} +(4,2);
    \draw[<->,dashed,line width=1mm](7,2)--(7,4)node[pos=0.5,left,align=center]{plane\\duality};

    \draw[<-,line width=1mm](9,1)--(10,1);
    \draw[<-,line width=1mm](9,5)--(10,5);
    
    \node at(12,6)[above]{$\Z$ of matroids};
    \draw[very thick] (10,4) rectangle node[align=center]{\Cref{theorem:matroidal biggs_formula}} +(4,2);
    \draw[very thick] (10,0) rectangle node[align=center]{\Cref{theorem:matroidal main_theorem}} +(4,2);
    \draw[<->,line width=1mm](12,2)--(12,4)node[pos=0.5,left,align=center]{matroid\\duality};
\end{tikzpicture}
\caption{Relation among expansions of invariants}
\label{figure:relation}
\end{figure}

\Cref{figure:relation} illustrates the relation among the expansions given in \Cref{theorem:graphic main,theorem:matroidal biggs_formula,theorem:matroidal main_theorem,theorem:coloring-flow expansions}.
Here the last two theorems expands the \emph{normalized Potts model partition function} $\Z$ of a matroid along the deletions and along the contractions. 

The leftmost two expand the chromatic polynomial $\chi$ and the flow polynomial $\chi^\ast$ of a graph, the middle two do $Z$ of a graph, and the rightmost two do $\Z$ of a matroid, along the deletions (upper row) and along the contractions (lower row).

The five solid (bidirectional) arrows mean implication, but the two dashed bidirectional arrows require planarity of graphs for it. The solid bidirectional arrow is due to the matroid duality.

This paper is organized as follows. The next section consists of an elementary, combinatorial proof of the main theorem~(\Cref{theorem:graphic main}). \Cref{section:matroid duality} generalizes the main theorem for graphs to the two expansions of the normalized Potts model partition function of a matroid where the latter two expansions transfer to each other by the matroid duality. After Rota's study on the chromatric polynomials and the flow polynomials~\cite{rota1964foundations}, \Cref{section:application} specializes the Potts model partition function to them and derives \Cref{theorem:coloring-flow expansions}. \Cref{section:discussion} mentions Fourier-analytic proofs and combinatorial proofs of \Cref{theorem:coloring-flow expansions}.

\section{Elementary proof of \Cref{theorem:graphic main}}\label{section:combinatorial proof}

We verify the two expansions of \Cref{theorem:graphic main}, with the same proof idea.

\begin{proof}[Proof of \Cref{theorem:graphic biggs_formula}]
\begin{align*}
    &\sum_{F\subseteq E}\left(\frac{\bm{u}}{\bm{v}}-1\right)^F Z(G-F;q,(u_e)_{e\in E-F})\\
    =&\sum_{F\subseteq E}\left(\frac{\bm{u}}{\bm{v}}-1\right)^F\sum_{A\subseteq E-F}q^{k_{G-F}(A)}\bm{u}^A.
\end{align*}
Here $k_{G-F}(A)=k_G(A)$ whenever $A\subseteq E-F$, so
\begin{align*}
    =&\sum_{A\subseteq E}q^{k_G(A)}\bm{u}^A\sum_{F\subseteq E-A}\left(\frac{\bm{u}}{\bm{v}}-1\right)^F\\
    =&\sum_{A\subseteq E}q^{k_G(A)}\bm{u}^A\left(\left(\frac{\bm{u}}{\bm{v}}-1\right)+1\right)^{E-A}\quad(\text{by binomial theorem})\\
    =&\sum_{A\subseteq E}q^{k_G(A)}\left(\frac{\bm{u}}{\bm{v}}\right)^E\bm{v}^A=\left(\frac{\bm{u}}{\bm{v}}\right)^E Z(G;q,\bm{v}).
\end{align*}
\end{proof}

\begin{proof}[Proof of \Cref{theorem:graphic main_theorem}]
\begin{align*}
    Z(G;q,\bm{v})&=\sum_{A\subseteq E}q^{k_G(A)}\bm{v}^A=\sum_{A\subseteq E}q^{k_G(A)}((\bm{v}-\bm{u})+\bm{u})^A\\
    &=\sum_{A\subseteq E}q^{k_G(A)}\sum_{F\subseteq A}(\bm{v}-\bm{u})^F\bm{u}^{A-F}\quad(\text{by binomial theorem})\\
    &=\sum_{F\subseteq E}(\bm{v}-\bm{u})^F\sum_{F\subseteq A\subseteq E}q^{k_G(A)}\bm{u}^{A-F}\\
    &=\sum_{F\subseteq E}(\bm{v}-\bm{u})^F\sum_{A\subseteq E-F}q^{k_G(A\cup F)}\bm{u}^A,
\end{align*}
which is the right side of \Cref{theorem:graphic main_theorem}, because $k_G(A\cup F)=k_{G/F}(A)$ whenever $A\subseteq E-F$.
\end{proof}

\section{Matroid theoretic aspect of \Cref{theorem:graphic main}}\label{section:matroid duality}

By extending the partition function of a graph to that of a matroid, we reveal the relation via matroids between \Cref{theorem:graphic biggs_formula,theorem:graphic main_theorem}.

The first subsection recalls matroids and their relevant properties from~\cite{oxleymatroidtheory}. The second subsection proves \Cref{theorem:graphic main_theorem} from \Cref{theorem:graphic biggs_formula}. These expansions are dual, so we can prove \Cref{theorem:graphic biggs_formula} from \Cref{theorem:graphic main_theorem} similarly.

\subsection{Preliminary on matroids}
This subsection is based on \cite{oxleymatroidtheory}. Among many definitions of matroids, we work with rank functions.

Let $M$ be a \emph{matroid} with the ground set $E$ and the \emph{rank function} $r_M$ from the power set of $E$ into the set of non-negative integers. According to \cite{oxleymatroidtheory}*{Theorem~1.3.2}, $M$ is completely determined by its ground set $E$ and its rank function $r_M$.

For $F\subseteq E$, $M-F$ and $M/F$ are the matroids on $E-F$ with rank functions
\begin{align*}
    r_{M-F}(A)&\coloneqq r_M(A),\\
    r_{M/F}(A)&\coloneqq r_M(A\cup F)-r_M(F)
\end{align*}
for all $A\subseteq E-F$. $M-F$ (resp.\@ $M/F$) is the deletion (resp.\@ contraction) of $F$ from $M$, see~\cite{oxleymatroidtheory}*{3.1.5~and~Proposition~3.1.6}.

For every matroid $M$, the dual matroid $M^\ast$ is the matroid on $E$ with rank function~\cite{oxleymatroidtheory}*{Proposition~2.1.9}
\[r_{M^\ast}(A)\coloneqq|A|-r_M(E)+r_M(E-A)\quad(\forall A\subseteq E).\]
In fact, $M^\ast-F=(M/F)^\ast$ according to~\cite{oxleymatroidtheory}*{3.1.1}.

The \emph{rank function} of a graph $G=(V,E)$ is, by definition, $r_G(A)\coloneqq|V|-k_G(A)$. Then $G$ forms a matroid $M(G)$ with the ground set $E$ and the rank function $r_{M(G)}=r_G$, which is called the \emph{cycle matroid} of $G$. Then $M(G)-F$ (resp.\@ $M(G)/F$) turns out to be $M(G-F)$ (resp.\@ $M(G/F)$) by~\cite{oxleymatroidtheory}*{3.1.2~and~Proposition~3.2.1}.

\subsection{Matroid duality for \Cref{theorem:graphic main}}

The \emph{normalized Potts model partition function} $\Z(G;q,\bm v)$ of a graph is, by definition,
\[\Z(G;q,\bm v)\coloneqq q^{-|V|}Z(G;q,\bm v)=\sum_{A\subseteq E}q^{-r_G(A)}\bm v^A.\]
Since the vertex sets of $G$ and $G-F$ ($F\subseteq E$) are equal, Biggs formula~(\Cref{theorem:graphic biggs_formula}) is valid even if the partition function $Z$ is replaced by the normalized partition function $\Z$.

Similarly, we define the normalized Potts model partition function of a matroid:

\begin{definition}[normalized Potts model partition function of a matroid~\cite{sokal2005multivariate}]
\[\Z(M;q,\bm v)\coloneqq\sum_{A\subseteq E}q^{-r_M(A)}\bm v^A.\]
\end{definition}

By $r_{M(G)}=r_G$, it holds that $\Z(M(G);q,\bm v)=\Z(G;q,\bm v)$. The proof of Biggs formula~(\Cref{theorem:graphic biggs_formula}) does not mention the properties specific to graphs, so it can be extended for $\Z$ of matroid.

\begin{theorem}[Biggs formula for matroids]\label{theorem:matroidal biggs_formula}
\[\Z(M;q,\bm{v})=\left(\frac{\bm{v}}{\bm{u}}\right)^E\sum_{F\subseteq E}\left(\frac{\bm{u}}{\bm{v}}-1\right)^F\Z(M-F;q,(u_e)_{e\in E-F}).\]
\end{theorem}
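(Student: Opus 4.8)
The plan is to transcribe the elementary proof of \Cref{theorem:graphic biggs_formula} almost verbatim, replacing $q^{k_G(A)}$ by $q^{-r_M(A)}$ throughout and replacing the graph-theoretic fact $k_{G-F}(A)=k_G(A)$ for $A\subseteq E-F$ by the matroid deletion rule $r_{M-F}(A)=r_M(A)$, which is literally the definition of $M-F$. So I would start from the right-hand side and substitute the definition $\Z(M-F;q,(u_e)_{e\in E-F})=\sum_{A\subseteq E-F}q^{-r_{M-F}(A)}\bm{u}^A$, then use $r_{M-F}(A)=r_M(A)$ to rewrite the exponent as $q^{-r_M(A)}$.

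Next I would swap the order of summation: the pairs $(F,A)$ with $F\subseteq E$ and $A\subseteq E-F$ are exactly the pairs $(A,F)$ with $A\subseteq E$ and $F\subseteq E-A$. This rewrites the double sum as $\sum_{A\subseteq E}q^{-r_M(A)}\bm{u}^A\sum_{F\subseteq E-A}\left(\frac{\bm{u}}{\bm{v}}-1\right)^F$, and the inner sum equals $\prod_{e\in E-A}\left(\left(\frac{u_e}{v_e}-1\right)+1\right)=\left(\frac{\bm{u}}{\bm{v}}\right)^{E-A}$ by the binomial theorem. Combining $\bm{u}^A$ with $\left(\frac{\bm{u}}{\bm{v}}\right)^{E-A}$ gives $\left(\frac{\bm{u}}{\bm{v}}\right)^E\bm{v}^A$, so the whole expression becomes $\left(\frac{\bm{u}}{\bm{v}}\right)^E\sum_{A\subseteq E}q^{-r_M(A)}\bm{v}^A=\left(\frac{\bm{u}}{\bm{v}}\right)^E\Z(M;q,\bm{v})$. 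Multiplying both sides by $\left(\frac{\bm{v}}{\bm{u}}\right)^E$ yields the claimed identity.

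The main obstacle is essentially that there is none: the only place the structure of graphs entered the original proof was through the identity for connected components of a deletion, and under the cycle matroid this is precisely $r_{M(G)-F}=r_{M(G-F)}$, which for a general matroid is replaced by the defining equation $r_{M-F}(A)=r_M(A)$. I would therefore present the proof as a short computation, pointing out that it is the proof of \Cref{theorem:graphic biggs_formula} with $k_G$ replaced by $-r_M$, rather than introducing any new machinery. If desired, one further remark can be added: since $\Z(M(G);q,\bm v)=\Z(G;q,\bm v)$ and $M(G)-F=M(G-F)$, \Cref{theorem:matroidal biggs_formula} specializes back to the normalized form of \Cref{theorem:graphic biggs_formula} for cycle matroids, confirming consistency.
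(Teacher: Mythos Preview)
Your proposal is correct and is exactly the approach the paper takes: its entire proof is the one sentence ``By replacing $k_G(A)$ with $-r_M(A)$, $Z(G;q,\bm v)$ with $\Z(M;q,\bm v)$ and so on in the proof of \Cref{theorem:graphic biggs_formula}, we obtain the proof.'' You have simply written out those replacements in full, using the defining relation $r_{M-F}(A)=r_M(A)$ in place of $k_{G-F}(A)=k_G(A)$, which is precisely what the paper intends.
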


\begin{proof}
By replacing $k_G(A)$ with $-r_M(A)$, $Z(G;q,\bm v)$ with $\Z(M;q,\bm v)$ and so on in the proof of \Cref{theorem:graphic biggs_formula}, we obtain the proof.
\end{proof}

The normalized partition function behaves as follows along matroid duality~\cite{sokal2005multivariate}*{(4.14b)}:

\begin{lemma}\label{lemma:partition_function_of_dual}
    \[\Z(M^\ast;q,\bm v)=q^{r_M(E)-|E|}\bm{v}^E\Z(M;q,q/\bm{v}).\]
\end{lemma}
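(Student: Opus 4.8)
The plan is to expand both sides directly from the definition of $\Z$ and match the sums term by term via the bijection $A \mapsto E - A$ on subsets of $E$. Starting from the right-hand side,
\[
q^{r_M(E)-|E|}\bm{v}^E\,\Z(M;q,q/\bm{v})
= q^{r_M(E)-|E|}\bm{v}^E\sum_{A\subseteq E}q^{-r_M(A)}\left(\frac{q}{\bm v}\right)^{A},
\]
and since $\left(q/\bm v\right)^A = q^{|A|}\bm v^{-A}$ and $\bm v^E\bm v^{-A} = \bm v^{E-A}$, the summand becomes $q^{r_M(E)-|E|+|A|-r_M(A)}\bm v^{E-A}$. Reindexing by $B\coloneqq E-A$, so that $|A| = |E|-|B|$ and $A = E-B$, the exponent of $q$ is $r_M(E) - |B| - r_M(E-B)$, which by the definition of the dual rank function is exactly $-r_{M^\ast}(B)$. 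Hence the right-hand side equals $\sum_{B\subseteq E} q^{-r_{M^\ast}(B)}\bm v^{B} = \Z(M^\ast;q,\bm v)$, which is the left-hand side.

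The key steps, in order: (i) substitute $\bm v \mapsto q/\bm v$ into the defining sum for $\Z(M;q,\cdot)$; (ii) use the abbreviation rules $\left(q/\bm v\right)^A = q^{|A|}\bm v^{-A}$ and $\bm v^E \cdot \bm v^{-A} = \bm v^{E-A}$ to pull everything into a single product over $E-A$; (iii) collect the power of $q$ into one exponent $r_M(E)-|E|+|A|-r_M(A)$; (iv) perform the change of variables $B = E-A$ and recognize the resulting $q$-exponent as $-r_{M^\ast}(B)$ using $r_{M^\ast}(B) = |B| - r_M(E) + r_M(E-B)$; (v) conclude that the reindexed sum is the defining sum for $\Z(M^\ast;q,\bm v)$.

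I do not expect a genuine obstacle here: the statement is a bookkeeping identity and the only thing to be careful about is the interplay of the $\bm v$-exponent and the $\bm v$-inversion hidden in the substitution $q/\bm v$, together with keeping track of $|A|$ versus $|E|-|B|$ when reindexing. The mildly delicate point is that the substitution $\bm v \mapsto q/\bm v$ requires each $v_e \neq 0$, which is consistent with the paper's standing convention that denominators are nonzero; one should note this so that the manipulation $\left(q/\bm v\right)^A = q^{|A|}\bm v^{-A}$ is legitimate. Everything else is a direct computation.
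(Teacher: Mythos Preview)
Your proof is correct and is essentially the same computation as the paper's: expand the defining sum, use $r_{M^\ast}(B)=|B|-r_M(E)+r_M(E-B)$, and reindex via $A\mapsto E-A$. The only cosmetic difference is direction---you start from the right-hand side and arrive at $\Z(M^\ast;q,\bm v)$, whereas the paper starts from the left-hand side and unwinds to the right.
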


\begin{proof}
    The left side is
    \begin{align*}
        \sum_{A\subseteq E}q^{-(|A|-r_M(E)+r_M(E-A))}\bm{v}^A&=q^{r_M(E)}\sum_{A\subseteq E}q^{-r_M(E-A)}\left(\frac{\bm{v}}q\right)^A\\
        &=q^{r_M(E)}\left(\frac{\bm v}{q}\right)^E\sum_{A\subseteq E}q^{-r_M(A)}\left(\frac{q}{\bm{v}}\right)^A
    \end{align*}
    which is the right side.
\end{proof}

By \Cref{theorem:matroidal biggs_formula,lemma:partition_function_of_dual}, we prove the following:

\begin{theorem}[\Cref{theorem:graphic main_theorem} for matroids]\label{theorem:matroidal main_theorem}
\[\Z(M;q,\bm{v})=\sum_{F\subseteq E}q^{-r_M(F)}\left(\bm{v}-\bm{u}\right)^F\Z(M/F;q,(u_e)_{e\in E-F}).\]
\end{theorem}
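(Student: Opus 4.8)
The plan is to deduce the contraction expansion from the deletion expansion (\Cref{theorem:matroidal biggs_formula}) by passing to the dual matroid, exactly as the opening sentence of this subsection suggests. The two structural ingredients are the identity $M^\ast-F=(M/F)^\ast$ recalled in the preliminaries and the duality formula for $\Z$ in \Cref{lemma:partition_function_of_dual}; everything else is bookkeeping of powers of $q$ and of the monomial weights.

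First I would apply \Cref{theorem:matroidal biggs_formula} to the dual matroid $M^\ast$ with auxiliary weight sequences $\bm v'$ and $\bm u'$, and immediately rewrite $M^\ast-F$ as $(M/F)^\ast$, obtaining
\[
\Z(M^\ast;q,\bm v')=\left(\frac{\bm v'}{\bm u'}\right)^E\sum_{F\subseteq E}\left(\frac{\bm u'}{\bm v'}-1\right)^F\Z\bigl((M/F)^\ast;q,(u'_e)_{e\in E-F}\bigr).
\]
Next I would invoke \Cref{lemma:partition_function_of_dual} in two places. On the left it gives $\Z(M^\ast;q,\bm v')=q^{r_M(E)-|E|}(\bm v')^E\,\Z(M;q,q/\bm v')$. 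On the right, applied to the matroid $M/F$ on the ground set $E-F$, it gives
\[
\Z\bigl((M/F)^\ast;q,(u'_e)_{e\in E-F}\bigr)=q^{\,r_{M/F}(E-F)-|E-F|}\,(\bm u')^{E-F}\,\Z\bigl(M/F;q,(q/u'_e)_{e\in E-F}\bigr),
\]
into which I would substitute $r_{M/F}(E-F)=r_M(E)-r_M(F)$ (from the definition of the contraction rank function) and $|E-F|=|E|-|F|$.

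Then I would make the change of variables $\bm v'=q/\bm v$ and $\bm u'=q/\bm u$. Under it the inner weights are restored, $q/v'_e=v_e$ and $q/u'_e=u_e$, so the two deep partition functions become $\Z(M;q,\bm v)$ and $\Z(M/F;q,(u_e)_{e\in E-F})$; moreover $\bm v'/\bm u'=\bm u/\bm v$ and $\bm u'/\bm v'-1=(\bm v-\bm u)/\bm u$. Collecting the remaining factors — $q^{r_M(E)-|E|}(\bm v')^E$ from the left-hand application of the lemma, the overall $(\bm v'/\bm u')^E$, and per $F$ the factor $q^{r_M(E)-r_M(F)-|E|+|F|}$ together with $(\bm u')^{E-F}$ and $((\bm v-\bm u)/\bm u)^F$ from the right — one checks that the powers of $q$ telescope to a common $q^{r_M(E)}$ and the monomials in $\bm u$ and $\bm v$ collapse to a common $\bm v^{-E}$ on both sides. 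Cancelling the common factor $q^{r_M(E)}\bm v^{-E}$ leaves precisely
\[
\Z(M;q,\bm v)=\sum_{F\subseteq E}q^{-r_M(F)}(\bm v-\bm u)^F\,\Z\bigl(M/F;q,(u_e)_{e\in E-F}\bigr),
\]
which is the assertion.

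The one place that requires care is this final cancellation: one must verify that the exponent of $q$ coming out of the left-hand side, $r_M(E)-|E|+|E|=r_M(E)$, matches the exponent $\bigl(r_M(E)-r_M(F)-|E|+|F|\bigr)+\bigl(|E|-|F|\bigr)=r_M(E)-r_M(F)$ in the summand up to the surviving $q^{-r_M(F)}$, and that the $\bm u$-exponents $E-(E-F)-F$ in the summand cancel entirely while the $\bm v$-exponents leave only the common $\bm v^{-E}$; no step uses anything beyond \Cref{theorem:matroidal biggs_formula}, \Cref{lemma:partition_function_of_dual}, $M^\ast-F=(M/F)^\ast$, and the rank formula for contractions. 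Finally, the symmetric remark — that \Cref{theorem:matroidal biggs_formula} can in turn be recovered from \Cref{theorem:matroidal main_theorem} — follows by running the same argument with $M$ replaced by $M^\ast$, using $(M^\ast)^\ast=M$ and the dual identity $M^\ast/F=(M-F)^\ast$.
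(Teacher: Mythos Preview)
Your proof is correct and follows essentially the same route as the paper: apply \Cref{theorem:matroidal biggs_formula} to $M^\ast$, use $M^\ast-F=(M/F)^\ast$, apply \Cref{lemma:partition_function_of_dual} on both sides together with $r_{M/F}(E-F)=r_M(E)-r_M(F)$, and then substitute. The only cosmetic difference is that you introduce primed variables $\bm v'=q/\bm v$, $\bm u'=q/\bm u$ up front, whereas the paper keeps the original symbols and performs the substitution $q/\bm v\mapsto\bm v$, $q/\bm u\mapsto\bm u$ at the very end.
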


\begin{proof}
Biggs formula~(\Cref{theorem:matroidal biggs_formula}) applied to the dual matroid $M^\ast$ is
\[\Z(M^\ast;q,\bm{v})=\left(\frac{\bm{v}}{\bm{u}}\right)^E\sum_{F\subseteq E}\left(\frac{\bm{u}}{\bm{v}}-1\right)^F\Z((M/F)^\ast;q,(u_e)_{e\in E-F}),\]
from $M^\ast-F=(M/F)^\ast$. By \Cref{lemma:partition_function_of_dual}, the left side of the previous identity is
\[q^{r_M(E)-|E|}\bm{v}^E\Z(M;q,q/\bm{v})=q^{r_M(E)}\left(\frac{\bm v}{q}\right)^E\Z(M;q,q/\bm{v}),\]
whereas the right side is
\[\left(\frac{\bm{v}}{\bm{u}}\right)^E\sum_{F\subseteq E}\left(\frac{\bm{u}}{\bm{v}}-1\right)^Fq^{r_{M/F}(E-F)-|E-F|}\bm{u}^{E-F}\Z(M/F;q,(q/u_e)_{e\in E-F}).\]
The identity $r_{M/F}(E-F)=r_M(E)-r_M(F)$ implies
\begin{align*}
    &\left(\frac{\bm{u}}{\bm{v}}-1\right)^Fq^{r_{M/F}(E-F)-|E-F|}\bm{u}^{E-F}\Z(M/F;q,(q/u_e)_{e\in E-F})\\
    =&q^{r_M(E)}\left(\frac{\bm u}{q}\right)^Eq^{-r_M(F)}\left(\frac{\bm{u}}{\bm{v}}-1\right)^F\left(\frac{q}{\bm{u}}\right)^F\Z(M/F;q,(q/u_e)_{e\in E-F})\\
    =&q^{r_M(E)}\left(\frac{\bm u}{q}\right)^Eq^{-r_M(F)}\left(\frac{q}{\bm v}-\frac{q}{\bm u}\right)^F\Z(M/F;q,(q/u_e)_{e\in E-F}).
\end{align*}
Thus, the coefficients are canceled and become
\[\Z(M;q,q/\bm{v})=\sum_{F\subseteq E}q^{-r_M(F)}\left(\frac{q}{\bm v}-\frac{q}{\bm u}\right)^F\Z(M/F;q,(q/u_e)_{e\in E-F}).\]
Replace $q/\bm{v},q/\bm{u}$ with $\bm{v},\bm{u}$ respectively, we obtain \Cref{theorem:matroidal main_theorem}.
\end{proof}
To sum up, the main theorem~(\Cref{theorem:graphic main}) holds for not only graphs but also matroids.

Now, by \Cref{theorem:matroidal main_theorem} for the cycle matroid $M(G)$ of a graph $G$,
\[\Z(G;q,\bm{v})=\sum_{F\subseteq E}q^{-r_G(F)}\left(\bm{v}-\bm{u}\right)^F\Z(G/F;q,\bm{u}).\]
Then, the multiplication of both sides by $q^{|V|}$ yields \Cref{theorem:graphic main_theorem}
\[Z(G;q,\bm{v})=\sum_{F\subseteq E}\left(\bm{v}-\bm{u}\right)^FZ(G/F;q,\bm{u})\]
since the number of vertices of $G/F$ is $k_G(F)=|V|-r_G(F)$.
This completes the demonstration of \Cref{theorem:graphic main_theorem} from \Cref{theorem:graphic biggs_formula}, by utilizing matroid duality.

\section{Expansions of chromatic and flow polynomials}\label{section:application}

In \cite{rota1964foundations}, Rota discussed graph colorings, flows, and matroids with the \emph{M{\"o}bius inversion formula}. We use its variant~\cite{rota1964foundations}*{Corollary~1}:

\begin{theorem}\label{theorem:mobius_inversion_formula}
    Let $E$ be a finite set and $f,g$ be functions defined on the powerset of $E$. If for any $A\subseteq E$
    \[f(A)=\sum_{A\subseteq B\subseteq E}g(B),\]
    then for all $A\subseteq E$
    \[g(A)=\sum_{A\subseteq B\subseteq E}(-1)^{|B-A|}f(B).\]
\end{theorem}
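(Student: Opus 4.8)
The plan is to prove this variant of the M\"obius inversion formula by direct substitution: insert the hypothesis into the right-hand side of the asserted identity and show that the double sum collapses to $g(A)$. Concretely, using $f(B)=\sum_{B\subseteq C\subseteq E}g(C)$, one writes
\[
\sum_{A\subseteq B\subseteq E}(-1)^{|B-A|}f(B)=\sum_{A\subseteq B\subseteq E}(-1)^{|B-A|}\sum_{B\subseteq C\subseteq E}g(C),
\]
and then interchanges the (finite) summations. The pairs $(B,C)$ with $A\subseteq B\subseteq C\subseteq E$ are precisely those with $A\subseteq C\subseteq E$ together with $A\subseteq B\subseteq C$, so the expression becomes
\[
\sum_{A\subseteq C\subseteq E}g(C)\left(\sum_{A\subseteq B\subseteq C}(-1)^{|B-A|}\right).
\]

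The next step is to evaluate the inner sum. Writing each $B$ with $A\subseteq B\subseteq C$ uniquely as $B=A\cup D$ with $D\subseteq C-A$ (a disjoint union), one has $|B-A|=|D|$, so
\[
\sum_{A\subseteq B\subseteq C}(-1)^{|B-A|}=\sum_{D\subseteq C-A}(-1)^{|D|}=\sum_{j=0}^{|C-A|}\binom{|C-A|}{j}(-1)^j=(1-1)^{|C-A|},
\]
which equals $1$ if $C=A$ and $0$ otherwise. Hence only the term $C=A$ survives, and the whole expression is $g(A)$, as claimed.

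I do not expect a genuine obstacle here: the only points needing a word of justification are that the swap of summation order is unconditional because all sums are finite, and that the alternating sum of binomial coefficients over a nonempty finite set vanishes (the binomial theorem). One could alternatively deduce the statement from general M\"obius inversion on the Boolean lattice $(2^E,\supseteq)$, whose M\"obius function is $\mu(A,B)=(-1)^{|B-A|}$ for $A\subseteq B$; but since the hypothesis is phrased as a summation ``upward'' over supersets, the self-contained computation above is the most direct route and keeps the paper's use of this lemma fully transparent.
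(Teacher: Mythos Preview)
Your proof is correct and is the standard direct verification: substitute the hypothesis, swap the finite sums, and collapse the inner alternating sum via the binomial identity $\sum_{D\subseteq C-A}(-1)^{|D|}=[C=A]$. There is nothing to compare against, since the paper does not prove this theorem at all; it simply quotes it as a known variant of the M\"obius inversion formula, citing Rota~\cite{rota1964foundations}*{Corollary~1}.
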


In \cite{rota1964foundations}*{Section~9~and~10}, Rota spelled out the chromatic polynomials and the flow polynomials by using the M\"obius inversion formula. Actually, it is only the specialization of the Potts model partition function for them. Thus based on Rota's work, we prove these specializations directly and clearly with \Cref{definition:fortuin_kasteleyn_representation_of_the_potts_model}.

Firstly, we recall the definitions of the chromatic polynomial $\chi(G;q)$ and the flow polynomial $\chi^\ast(G;q)$. From \Cref{proposition:specializations}, these values are polynomials of $q$.

\begin{itemize}
    \item
    For a positive integer $q$, a \emph{$q$-coloring} of $G$ is a mapping from $V$ to some set of size $q$. A $q$-coloring $\omega$ is called \emph{proper} if for every edge $e\in E$, let its endpoints be $x,y\in V$, $\omega(x)\ne\omega(y)$.
    Let $\chi(G;q)$ be the number of proper $q$-colorings of a graph $G$.

    \item
    Fix an orientation of $G$. That is, for each edge $e\in E$, we name two endpoints of $e$ as $\From e$ and $\To e$ ($\From e=\To e$ if $e$ is a loop).
    Let $H$ be an Abelian group with identity element $0_H$. An \emph{$H$-flow} of $G$ is a mapping $\phi$ from $E$ to $H$ with the Kirchhoff's current law: for each $u\in V$,
    \[\sum_{\substack{e\in E\\\From e=u}}\phi(e)=\sum_{\substack{e\in E\\\To e=u}}\phi(e).\]
    An $H$-flow $\phi$ is called \emph{nowhere-zero} if $\phi(e)\ne 0_H$ for all edge $e\in E$.
    It is proved (in, for example, \cite{diestelgraphtheory}*{Theorem~6.3.1}) that the number of nowhere-zero $H$-flows depends only on $|H|\eqqcolon q$ and not on orientation of $G$ or structure of $H$, so it is written as $\chi^\ast(G;q)$.
\end{itemize}

\begin{proposition}[Specializations]\label{proposition:specializations}
    Let $q$ be a positive integer. Then
    \begin{align*}
        \chi(G;q)&=Z(G;q,-1),\\
        \chi^\ast(G;q)&=(-1)^{|E|}q^{-|V|}Z(G;q,-q).
    \end{align*}
\end{proposition}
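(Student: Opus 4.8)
The plan is to derive each identity by inclusion--exclusion over the power set $2^E$, i.e.\ by \Cref{theorem:mobius_inversion_formula}, and then to recognize the resulting alternating sum as the subset expansion of \Cref{definition:fortuin_kasteleyn_representation_of_the_potts_model} with $\bm v$ specialized to the appropriate constant sequence. I would treat the two formulas in parallel, using throughout that $r_G(A)=|V|-k_G(A)$.

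\emph{The chromatic identity.} Fix a color set of size $q$. For $A\subseteq E$, let $g(A)$ be the number of $q$-colorings $\omega$ of $G$ whose set of monochromatic edges $\{e\in E:\omega(\From e)=\omega(\To e)\}$ is exactly $A$, and set $f(A)\coloneqq\sum_{A\subseteq B\subseteq E}g(B)$, so that $f(A)$ counts the $q$-colorings monochromatic on every edge of $A$. These are precisely the colorings constant on each connected component of the subgraph $(V,A)$, hence $f(A)=q^{k_G(A)}$. \Cref{theorem:mobius_inversion_formula} then yields
\[
\chi(G;q)=g(\emptyset)=\sum_{B\subseteq E}(-1)^{|B|}f(B)=\sum_{B\subseteq E}q^{k_G(B)}(-1)^{|B|}=Z(G;q,-1).
\]

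\emph{The flow identity.} By the cited fact \cite{diestelgraphtheory}*{Theorem~6.3.1} that the number of nowhere-zero $H$-flows depends only on $|H|$, I would fix a convenient finite Abelian group $H$ with $|H|=q$ together with a reference orientation of $G$. For $A\subseteq E$, let $g(A)$ be the number of $H$-flows $\phi$ of $G$ whose zero set $\{e\in E:\phi(e)=0_H\}$ is exactly $A$, and set $f(A)\coloneqq\sum_{A\subseteq B\subseteq E}g(B)$, so that $f(A)$ counts the $H$-flows of $G$ vanishing on all of $A$. Such a flow is nothing but an $H$-flow of the deletion $G-A$: Kirchhoff's law on $G$ restricted to the non-vanishing edges is exactly Kirchhoff's law on $G-A$, and conversely any $H$-flow of $G-A$ extends by $0_H$ on $A$. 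Moreover, for any graph $G'$, choosing a maximal forest exhibits an $H$-flow of $G'$ as freely and uniquely determined by its values on the non-forest edges through the associated fundamental cycles, so $G'$ has exactly $q^{|E'|-r_{G'}(E')}$ such flows. Using $r_{G-A}=r_G$ on subsets of $E\setminus A$, this gives $f(A)=q^{(|E|-|A|)-r_G(E\setminus A)}$, whence, by \Cref{theorem:mobius_inversion_formula} and the reindexing $A=E\setminus B$,
\[
\chi^\ast(G;q)=g(\emptyset)=\sum_{B\subseteq E}(-1)^{|B|}f(B)=\sum_{A\subseteq E}(-1)^{|E|-|A|}q^{|A|-r_G(A)}.
\]
Multiplying both sides by $(-1)^{|E|}q^{|V|}$ turns the left-hand side into $(-1)^{|E|}q^{|V|}\chi^\ast(G;q)$ and, using $r_G(A)=|V|-k_G(A)$, the right-hand side into $\sum_{A\subseteq E}q^{k_G(A)}(-q)^{|A|}=Z(G;q,-q)$; dividing back gives the asserted formula $\chi^\ast(G;q)=(-1)^{|E|}q^{-|V|}Z(G;q,-q)$.

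\emph{Main obstacle.} The two M\"obius inversions and the exponent bookkeeping are routine. The substantive content lies on the flow side, in the two structural facts used above: that an $H$-flow vanishing on $A$ is the same datum as an $H$-flow of $G-A$, and that the number of $H$-flows of a graph equals $q$ raised to its nullity for an \emph{arbitrary} finite Abelian group $H$ (the maximal-forest/fundamental-cycle argument). It is also worth recording explicitly, before committing to a particular $H$, that $\chi^\ast$ is independent of the orientation and of $H$, which is precisely the cited result that licenses that freedom.
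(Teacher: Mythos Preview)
Your proof is correct and follows essentially the same route as the paper: both arguments apply the M\"obius inversion of \Cref{theorem:mobius_inversion_formula} on $2^E$ with $f(A)$ equal to the number of colorings (resp.\ $H$-flows) that are monochromatic (resp.\ vanish) on all of $A$, compute $f(A)=q^{k_G(A)}$ (resp.\ $q^{|E\setminus A|-r_G(E\setminus A)}$), and read off the alternating sum as $Z(G;q,-1)$ (resp.\ $(-1)^{|E|}q^{-|V|}Z(G;q,-q)$). The only cosmetic difference is that the paper phrases $g(B)$ as $\chi(G/B;q)$ and $\chi^\ast(G-B;q)$ and obtains the relation $f(A)=\sum_{A\subseteq B}g(B)$ by instantiating the partition identity on $G/A$ and $G-A$, whereas you define $g$ directly as the count with \emph{exact} monochromatic/zero set and verify the summation tautologically; you are also more explicit than the paper in justifying the nullity count $q^{|E'|-r_{G'}(E')}$ for the total number of $H$-flows via a spanning-forest argument.
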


\begin{proof}
Note
\begin{align*}
    q^{|V|}&=\sum_{F\subseteq E}\chi(G/F;q),\\
    q^{|E|-r_G(E)}&=\sum_{F\subseteq E}\chi^\ast(G-F;q).
\end{align*}
Here, the left side of the first (resp.\@ second) equation is the number of $q$-colorings (resp.\@ $H$-flows) and the right side's $F$ is the set of edges $e$ such that the endpoints receives the same color (resp.\@ there are no flows on $e$).

To the first (resp.\@ second) equation, we apply the inversion formula (\Cref{theorem:mobius_inversion_formula}) with
\begin{align*}
f(A)&=q^{|V(G/A)|},&g(B)&=\chi(G/B;q),\\
(\text{resp.\@}\ f(A)&=q^{|E-A|-r_G(E-A)},&g(B)&=\chi^\ast(G-B;q)).
\end{align*}
Here $V(G/A)$ is the vertex set of $G/A$ and its size is $k_G(A)$.

Then we get these expansions:
\begin{align*}
    \chi(G/\emptyset;q)&=\sum_{F\subseteq E}(-1)^{|F|}q^{k_G(F)},\\
    \chi^\ast(G-\emptyset;q)&=\sum_{F\subseteq E}(-1)^{|F|}q^{|E-F|-r_G(E-F)}\\
    &=\sum_{F\subseteq E}(-1)^{|E|+|F|}q^{|F|-|V|+k_G(F)}.
\end{align*}
This is what we wanted.
\end{proof}

We derive \Cref{theorem:coloring-flow expansions} from Biggs formula (\Cref{theorem:graphic biggs_formula}) and the main theorem (\Cref{theorem:graphic main_theorem}).
The proof of \Cref{equation:matiyasevich} by Biggs formula is taken from \cite{bychkov2021functional}.

\begin{proof}
\Cref{equation:matiyasevich} (resp.\@ \Cref{equation:dual_matiyasevich}) can be derived from \Cref{theorem:graphic biggs_formula} (resp.\@ \Cref{theorem:graphic main_theorem}) with $\bm{v}=-1$ and $\bm{u}=-q$ (resp.\@ $\bm{v}=-q$ and $\bm{u}=-1$), through \Cref{proposition:specializations}.
\end{proof}

\section{Discussion}\label{section:discussion}

We will call attention to proofs of \Cref{theorem:coloring-flow expansions} that employ Fourier analysis or combinatorics.

Matiyasevich~\cite{matiyasevich2009certain}*{THEOREM} proved \Cref{equation:matiyasevich} by using $\varepsilon\coloneqq \exp(2\pi i/q)$, and taking the plane dual with famous identity $\chi^\ast(G;q)=\chi(G^\ast;q)/q$. Here $G^\ast$ is the plane dual of $G$, and it can be proved by \Cref{lemma:partition_function_of_dual} with $M(G)^\ast=M(G^\ast)$ according to~\cite{oxleymatroidtheory}*{Lemma~2.3.7}.

Through $M(G)^\ast=M(G^\ast)$, \Cref{equation:matiyasevich} expands the chromatic polynomial of $G$ along the chromatic polynomials of the contractions of $G^\ast$. Matiyasevich introduced a variable $s$ when using Euler's Theorem to find the number of vertices of $G^\ast$, so it is conceivable that he may have been considering geometric duality on a general surface, but so far no such study has been conducted.

Bychkov et al.~\cite{bychkov2021functional}*{Theorem~2.18} found proof of \Cref{equation:matiyasevich} from \Cref{theorem:graphic biggs_formula}. They suggested that \cite{bychkov2021functional}*{(2.5)} is an ``inversion'' of \Cref{equation:matiyasevich}, and Lerner and Mukhamedjanova~\cite{lerner2022matiyasevich}*{Section~5.2} pointed out that \cite{bychkov2021functional}*{(2.5)} can be indeed deduced from \Cref{equation:matiyasevich} through the M\"obius inversion formula~(\Cref{theorem:mobius_inversion_formula}). Note that \Cref{theorem:graphic main} is invariant by applying this theorem.

Lerner and Mukhamedjanova~\cite{lerner2022matiyasevich}*{Theorem~2} proved \Cref{equation:dual_matiyasevich} by using the same $\varepsilon$ as \cite{matiyasevich2009certain}. We would like to point out here that \cite{lerner2022matiyasevich}*{Theorem~1~(10)} is, by manipulating $\sqrt q$ (the paper and the book use $m$ instead of $q$) appropriately, very close to \cite{biggs1977interaction}*{Theorem~5} which also uses $\varepsilon$ implicitly in his function on $\mathbb{Z}/q\mathbb{Z}$.

Lerner~\cite{lerner2024matroid} also generalized \Cref{theorem:coloring-flow expansions} to those for matroids with their \emph{characteristic polynomials}. Though this polynomial has only one variable $q$, it represent the chromatic polynomial of a matroid and the flow polynomial of a dual matroid at the same time.

As for a combinatorial approach around \Cref{theorem:coloring-flow expansions}, Read and Whitehead~\cite{read1999chromatic}*{(3.2)~and~(7.8)} proved both of \Cref{theorem:coloring-flow expansions} with a recursive expression which compresses chains or multiple edges. This technique is restated in \cite{sokal2005multivariate}*{Section~4.4~and~4.5}.

Their expansions (3.2) and (7.8) treat the length of chains or multiplicity of edges explicitly, and in fact, are included in \Cref{theorem:coloring-flow expansions} because partially removing chains (resp.\@ multiple edges) would create bridges (resp.\@ loops) then there are many $F\subseteq E$ such that $\chi^\ast(G-F;q)=0$ or $\chi(G/F;q)=0$.

In \cite{read1999chromatic}, Read and Whitehead concerned with the duality between \Cref{equation:matiyasevich,equation:dual_matiyasevich}; multiple edge is a plane dual of a chain. Woodall~\cite{WOODALL2002201}*{Theorem~2.1} provided another proof which uses binomial theorem and the Tutte polynomial of graphs. \Cref{section:combinatorial proof} is a simplification of his proof by using the Potts model partition function.

\begin{bibdiv}
\begin{biblist}

\bib{biggs1977interaction}{book}{
      author={Biggs, Norman},
       title={Interaction models},
      series={London Mathematical Society Lecture Note Series},
   publisher={Cambridge University Press},
        date={1977},
      volume={30},
}

\bib{br^^c3^^a4nd^^c3^^a9n2024lorentzian}{article}{
      author={Br{\"a}nd{\'e}n, Petter},
      author={Huh, June},
       title={{L}orentzian polynomials},
        date={2020},
     journal={Annals of Mathematics},
      volume={192},
      number={3},
       pages={821\ndash 891},
         url={https://doi.org/10.4007/annals.2020.192.3.4},
}

\bib{bychkov2021functional}{article}{
      author={Bychkov, Boris},
      author={Kazakov, Anton},
      author={Talalaev, Dmitry},
       title={Functional relations on anisotropic {P}otts models: from {B}iggs formula to the tetrahedron equation},
        date={2021},
     journal={SIGMA. Symmetry, Integrability and Geometry: Methods and Applications},
      volume={17},
       pages={035},
}

\bib{diestelgraphtheory}{book}{
      author={Diestel, Reinhard},
       title={Graph theory},
    language={English},
     edition={5th edition},
      series={Grad. Texts Math.},
   publisher={Berlin: Springer},
        date={2017},
      volume={173},
        ISBN={978-3-662-53621-6; 978-3-662-57560-4; 978-3-662-53622-3},
}

\bib{fortuin1972536}{article}{
      author={Fortuin, C.M.},
      author={Kasteleyn, P.W.},
       title={On the random-cluster model: {I}. {I}ntroduction and relation to other models},
        date={1972},
        ISSN={0031-8914},
     journal={Physica},
      volume={57},
      number={4},
       pages={536\ndash 564},
         url={https://www.sciencedirect.com/science/article/pii/0031891472900456},
}

\bib{KUNG2010617}{article}{
      author={Kung, J.~P.S.},
       title={Convolution-multiplication identities for {T}utte polynomials of graphs and matroids},
        date={2010},
        ISSN={0095-8956},
     journal={Journal of Combinatorial Theory, Series B},
      volume={100},
      number={6},
       pages={617\ndash 624},
         url={https://www.sciencedirect.com/science/article/pii/S0095895610000687},
}

\bib{lerner2024matroid}{article}{
      author={Lerner, E.~Yu.},
       title={Matroid variant of {M}atiyasevich formula and its application},
        date={2024},
     journal={arXiv preprint arXiv:2404.09347},
}

\bib{lerner2022matiyasevich}{article}{
      author={Lerner, E.~Yu.},
      author={Mukhamedjanova, S.~A.},
       title={Matiyasevich formula for chromatic and flow polynomials and {F}eynman amplitudes},
        date={2022},
     journal={Lobachevskii Journal of Mathematics},
      volume={43},
      number={12},
       pages={3552\ndash 3561},
}

\bib{matiyasevich2009certain}{article}{
      author={Matiyasevich, Yu.~V.},
       title={On a certain representation of the chromatic polynomial},
        date={2009},
     journal={arXiv preprint arXiv:0903.1213},
        note={Originally published in Russian, 1977},
}

\bib{oxleymatroidtheory}{book}{
      author={Oxley, James},
       title={{Matroid Theory}},
   publisher={Oxford University Press},
        date={2011},
        ISBN={9780198566946},
         url={https://doi.org/10.1093/acprof:oso/9780198566946.001.0001},
}

\bib{READ196852}{article}{
      author={Read, Ronald~C.},
       title={An introduction to chromatic polynomials},
        date={1968},
        ISSN={0021-9800},
     journal={Journal of Combinatorial Theory},
      volume={4},
      number={1},
       pages={52\ndash 71},
         url={https://www.sciencedirect.com/science/article/pii/S0021980068800870},
}

\bib{read1999chromatic}{article}{
      author={Read, Ronald~C.},
      author={Whitehead~Jr., Earl~Glen},
       title={Chromatic polynomials of homeomorphism classes of graphs},
        date={1999},
     journal={Discrete Mathematics},
      volume={204},
      number={1-3},
       pages={337\ndash 356},
}

\bib{rota1964foundations}{article}{
      author={Rota, Gian-Carlo},
       title={On the foundations of combinatorial theory: I. {T}heory of {M}{\"o}bius functions},
        date={1964},
       pages={332\ndash 360},
}

\bib{sokal2005multivariate}{article}{
      author={Sokal, Alan~D.},
       title={The multivariate {T}utte polynomial (alias {P}otts model) for graphs and matroids},
        date={2005},
       pages={173\ndash 226},
}

\bib{WOODALL2002201}{article}{
      author={Woodall, Douglas~R.},
       title={Tutte polynomial expansions for 2-separable graphs},
        date={2002},
        ISSN={0012-365X},
     journal={Discrete Mathematics},
      volume={247},
      number={1},
       pages={201\ndash 213},
         url={https://www.sciencedirect.com/science/article/pii/S0012365X01001777},
}

\end{biblist}
\end{bibdiv}

\end{document}